\numberwithin{equation}{section}
\theoremstyle{plain}
\newtheorem{prop}{Proposition}[section]
\newtheorem{thm}[prop]{Theorem}
\newtheorem{cor}[prop]{Corollary}
\newtheorem{lem}[prop]{Lemma}
\theoremstyle{definition}
\newtheorem{dfn}[prop]{Definition}
\newtheorem{example}[prop]{Example}
\newtheorem{remark}[prop]{Remark}
\newcommand{\lam}{\lambda}
\DeclareMathOperator{\rad}{rad}
\DeclareMathOperator{\rank}{rank}
\DeclareMathOperator{\End}{End}
\begin{document}
\title[Projective cell modules of Frobenius cellular algebras ]{Projective cell modules of Frobenius cellular algebras}

\author{Yanbo Li and Deke Zhao$^\ast$}

\address{Li: School of Mathematics and Statistics, Northeastern
University at Qinhuangdao, Qinhuangdao, 066004, P.R. China}

\email{liyanbo707@163.com}

\address{Zhao: School of Applied Mathematics, Beijing Normal University at Zhuhai, Zhuhai, 519085, P. R. China}

\email{deke@amss.ac.cn}

\begin{abstract}
For a finite dimensional Frobenius cellular algebra, a sufficient
and necessary condition for a simple cell module to be projective is
given. A special case that dual bases of the cellular basis
satisfying a certain condition is also considered. The result is
similar to that in symmetric case.
\end{abstract}

\subjclass[2000]{16D40, 16G30, 16L60.}

\keywords{Frobenius cellular algebra, Gram matrix,  projective
module.}

\thanks{$^\ast$ Corresponding author.}

\thanks{The work of the first author is supported by Fundamental Research
Funds for the Central Universities (N110423007). The work of the
second author is supported by the National Natural Science
Foundation of China (No.11101037).}

\maketitle

\section{Introduction}\label{xxsec1}

Cellular algebras were introduced by Graham and Lehrer \cite{GL} in
1996, motivated by previous work of Kazhdan and Lusztig \cite{KL}.
They were defined by a so-called cellular basis satisfying certain
axioms. The theory of cellular algebras provides a systematic
framework for studying the representation theory of many interesting
algebras from mathematics and physics. The classical examples of
cellular algebras include Hecke algebras of finite type, Ariki-Koike
algebras, Brauer algebras, Birman-Wenzl algebras and so on. We refer
the reader to \cite{G, GL, Xi2} for details. Recently, Koenig and Xi
\cite{KX8} introduced affine cellular algebras which contain
cellular algebras as special cases. They proved affine Hecke
algebras of type A are affine cellular.

Several important classes of cellular algebras are symmetric, such
as Hecke algebras of finite types, Ariki-Koike algebras and so on.
In \cite{L, L2}, Li studied the general theory of symmetric cellular
algebras, such as dual bases, centres and radicals. Moreover, Li and
Xiao considered the classification of the projective cell modules of
symmetric cellular agebras in \cite{LX}. Frobenius algebras are
natural generalizations of symmetric algebras. It is well known that
a Frobenius algebra is symmetric if and only if there exists a
Nakayama automorphism being the identical mapping. Examples of
non-symmetric Frobenius cellular algebras could be found in
\cite{KX2}. In \cite{L3}, Li investigated Nakayama automorphisms of
Frobenius cellular algebras. It is proved that the matrix associated
with a Nakayama automorphism with respect to a cellular basis is
uni-triangular under a certain condition. A natural question is
which properties of symmetric cellular algebras can be generalized
to Frobenius cases. In this paper, for a Frobenius cellular algebra,
we will give a sufficient and necessary condition for a simple cell
module to be projective. A special case that dual bases of the
cellular basis satisfying a certain condition is also considered.
The result is similar to that in symmetric case.

The paper is organized as follows. We begin with a quick review on
the theory of cellular algebras and Frobenius algebras. In
particular, we give in this section a corollary of
Gasch\"{u}tz-Ikeda's Theorem, which is more useful to study cell
modules of Frobenius cellular algebras. Then in Section 3, after
giving some properties of Frobenius cellular algebras, we give a
sufficient and necessary condition for a simple cell module to be
projective. A special case will be considered in Section 4. The
result is similar to that in symmetric case.

\section{Preliminaries}\label{xxsec2}

In this section, we will establish the basic notations and some
known results which are needed in the following sections. The main
references for this section are \cite{GP} and \cite{GL}.

\subsection{Frobenius algebras}
Let $R$ be a commutative ring with identity and $A$  a finite dimensional  associative
$R$-algebra. Suppose that there exists an $R$-bilinear form
$f:A\times A\rightarrow R.$ We say $f$ is
{\it non-degenerate} if the determinant of the matrix
$(f(a_{i},a_{j}))_{a_{i},a_{j}\in B}$ is not zero for some basis $B$
of $A$. We call $f$ {\it associative} if $f(ab,c)=f(a,bc)$ for all
$a,b,c\in A$.
\begin{dfn}\label{2.1}
An $R$-algebra $A$ is called {\it Frobenius} if there is a non-degenerate
associative bilinear form $f$ on $A$.
\end{dfn}

Let $A$ be a Frobenius algebra with a basis $B=\{a_{i}\mid
i=1,\ldots,n\}$. Let us take a non-degenerate associative bilinear
form $f$. Define an $R$-linear map $\tau: A\rightarrow R$ by
$$\tau(a)=f(a,1).$$
We call $d=\{d_{i}\mid i=1,\ldots,n\}$ the {\it right dual basis} of $B$
which is uniquely determined by the requirement that
$\tau(a_{i}d_{j})=\delta_{ij}$ for all $i, j=1,\ldots,n$. Similarly,
the {\it left dual basis} $D=\{D_{i}\mid i=i,\ldots,n\}$ is determined by
the requirement that $\tau(D_{j}a_{i})=\delta_{ij}$. Define an
$R$-linear map $\alpha:A\rightarrow A$ by $$\alpha(d_{i})=D_{i}.$$
Then $\alpha$ is a Nakayama automorphism of $A$.

\smallskip

From now on, all of modules considered in this paper will always be
left modules. Let $M$ be an $A$-module and $\theta\in {\rm
End}_R(M)$. Then we define the {\it averaging operator} $I(\theta)\in {\rm
End}_R(M)$ by
$$I(\theta)(m):=\sum_ia_i\theta(D_im) \quad\quad \forall\, m\in M.$$ It
is an $A$-module endomorphism. Furthermore, $I(\theta)$ is
independence of the choice of basis. This implies that $I(\theta)$
also can be defined as follows.
$$I(\theta)(m)=\sum_id_i\theta(a_im) \quad\quad \forall\, m\in M.$$
Let $\theta, \pi, \varphi\in \End_A(M)$. Then the definition implies
that $$I(\varphi\circ \theta)=I(\varphi)\circ \theta \quad {\rm
and}\quad I(\theta\circ\pi)=\theta\circ I(\pi).$$ Moreover, it is
helpful to point out that $I(\theta)\in \End_A(M)$.

One of the importance of the averaging operator is that it provides
a criterion for an $A$-module being projective.

\begin{lem}\rm (Gasch\"{u}tz-Ikeda)\label{2.2}
Let $A$ be a Frobenius algebra and $M$ an $A$-module. Then $M$ is
projective if and only if there exists some $\varphi\in {\rm
End}_R(M)$ such that $I(\varphi)={\rm id}_M$.
\end{lem}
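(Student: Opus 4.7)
The plan is to prove both implications by leveraging a single anchor identity: $I(\tau)=\mathrm{id}_A$ when $A$ is viewed as a left module over itself. First I would verify this identity. Writing any $a\in A$ as $a=\sum_j c_j a_j$ with $c_j\in R$, the defining property $\tau(D_i a_j)=\delta_{ij}$ of the left dual basis gives $c_i=\tau(D_i a)$, so
$$I(\tau)(a)=\sum_i a_i\tau(D_i a)=\sum_i\tau(D_i a)\,a_i=a.$$
Taking the direct sum of $n$ copies yields $I(\varphi_n)=\mathrm{id}_{A^n}$ for the coordinate-wise map $\varphi_n=\tau\oplus\cdots\oplus\tau\in\End_R(A^n)$.

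For the forward direction, suppose $M$ is projective. Realize $M$ as a direct summand of some free module $A^n$ via an $A$-linear inclusion $\iota\colon M\hookrightarrow A^n$ and retraction $p\colon A^n\twoheadrightarrow M$ with $p\circ\iota=\mathrm{id}_M$, and set $\varphi=p\circ\varphi_n\circ\iota\in\End_R(M)$. Because $\iota$ and $p$ are $A$-linear, they commute with the left multiplications by $D_i$ and $a_i$ that appear in the averaging sum, so
$$I(\varphi)(m)=\sum_i a_i\,p(\varphi_n(\iota(D_i m)))=p\Bigl(\sum_i a_i\varphi_n(D_i\iota(m))\Bigr)=p(I(\varphi_n)(\iota(m)))=p(\iota(m))=m.$$

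For the converse, assume $I(\varphi)=\mathrm{id}_M$. To show $M$ is projective it is enough to split an arbitrary surjection $\pi\colon N\twoheadrightarrow M$ of $A$-modules. Pick an $R$-linear section $s\colon M\to N$ (available in the cellular setting since $R$ is a field, so every surjection splits $R$-linearly) and apply the same averaging formula to the $R$-linear map $s\circ\varphi\colon M\to N$, defining $\tilde s(m)=\sum_i a_i s(\varphi(D_i m))$. The argument that $I(\theta)$ is $A$-linear goes through verbatim for $R$-linear maps between two $A$-modules, so $\tilde s$ is $A$-linear. Using $\pi\circ s=\mathrm{id}_M$ together with the $A$-linearity of $\pi$,
$$\pi(\tilde s(m))=\sum_i a_i\pi(s(\varphi(D_i m)))=\sum_i a_i\varphi(D_i m)=I(\varphi)(m)=m,$$
so $\tilde s$ is an $A$-linear section and $\pi$ splits.

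The only step requiring any care is extending the averaging construction from endomorphisms of a single $A$-module to $R$-linear maps between two different $A$-modules. This rests on exactly the same intertwining identity in $A\otimes_R A$ (equivalent to the defining property of the dual bases) that already underpins the assertion $I(\theta)\in\End_A(M)$ granted in the excerpt, so no new input is needed; everything else is direct manipulation of the dual-basis formulas.
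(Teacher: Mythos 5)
Your proof is correct. The paper itself offers no argument for Lemma \ref{2.2} --- it is quoted as the classical Gasch\"{u}tz--Ikeda (Higman) criterion with a pointer to \cite{GP} --- and what you give is essentially the standard proof from that source: the anchor identity $I(\tau)=\mathrm{id}_A$ (with $\tau$ read as the endomorphism $a\mapsto\tau(a)1_A$), transported to a direct summand of $A^n$ for the forward direction, and the averaging of $s\circ\varphi$ to split an arbitrary surjection for the converse. Both computations check out, and your remark that $A$-linearity of the averaged map only needs the Casimir-type identity $\sum_i aa_i\otimes D_i=\sum_i a_i\otimes D_ia$, which holds for $R$-linear maps between two different $A$-modules, is exactly the right justification for $\tilde s\in\mathrm{Hom}_A(M,N)$. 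Two minor points worth flagging, neither a gap in the paper's setting: (i) the converse needs the $R$-linear section $s$ to exist, so as stated it requires $R$ to be a field (or $M$ to be $R$-projective, or one restricts to splitting the canonical surjection from a free module); the paper only ever applies the lemma over a field $K$, where this is automatic; (ii) for a projective $M$ that is not finitely generated one should allow $A^{(I)}$ with $I$ infinite, which changes nothing in the computation and is moot here since $A$ is finite dimensional.
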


Let $R$ be a field. Suppose that $\dim M=m$ and take a basis $\{v_1,
\cdots, v_m\}$. For $1\leq i, j\leq m$, define $\varphi_{ij}\in
\End_R(M)$ by $\varphi_{ij}(v_s)=\delta_{is}v_j$. Clearly,
$\varphi_{ij}$ form a basis of $\End_R(M)$. The following result is
a simple corollary of Lemma \ref{2.2}. However, it is important in
this paper. Hence we will give a complete proof of it here.

\begin{cor}\label{2.3}
Let $A$ be a finite dimensional Frobenius algebra and $M$ a simple
$A$-module. Then $M$ is projective if and only if there exists some
$\varphi_{ij}$ such that $I(\varphi_{ij})\neq 0$.
\end{cor}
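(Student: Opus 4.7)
The plan is to derive Corollary \ref{2.3} directly from Lemma \ref{2.2} by exploiting the fact that the $\varphi_{ij}$ form an $R$-basis of $\End_R(M)$, together with Schur's lemma applied to the simple module $M$. The key observation is that $I:\End_R(M)\to\End_A(M)$ is an $R$-linear map, so the condition that every $I(\varphi_{ij})$ vanishes is equivalent to $I$ being identically zero on $\End_R(M)$.

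For the ``only if'' direction, I would simply unpack Gasch\"utz--Ikeda: there is some $\varphi\in\End_R(M)$ with $I(\varphi)=\operatorname{id}_M$; expanding $\varphi=\sum_{i,j}c_{ij}\varphi_{ij}$ in the given basis and applying $R$-linearity gives $\sum_{i,j}c_{ij}I(\varphi_{ij})=\operatorname{id}_M\neq 0$, so at least one summand must be nonzero.

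The ``if'' direction is where the simplicity of $M$ enters. Suppose $I(\varphi_{ij})\neq 0$ for some pair $(i,j)$. Since $R$ is a field and $M$ is simple, Schur's lemma tells us that $\End_A(M)$ is a division ring, so the nonzero $A$-endomorphism $I(\varphi_{ij})$ is invertible. Let $\theta=I(\varphi_{ij})^{-1}\in\End_A(M)$, and consider $\theta\circ\varphi_{ij}\in\End_R(M)$. Using the identity $I(\theta\circ\pi)=\theta\circ I(\pi)$ recorded just before the statement, we get
\[
I(\theta\circ\varphi_{ij})=\theta\circ I(\varphi_{ij})=\operatorname{id}_M,
\]
and Lemma \ref{2.2} then forces $M$ to be projective.

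The main obstacle, if any, is conceptual rather than computational: one must recognize that simplicity of $M$ is exactly what upgrades the mere nonvanishing of some $I(\varphi_{ij})$ to the existence of an endomorphism averaging to the identity. Without Schur, one could only say that the image of $I$ is a nonzero $R$-subspace of $\End_A(M)$, which is insufficient to invoke Gasch\"utz--Ikeda. I would therefore make sure to state Schur's lemma explicitly at the step where $I(\varphi_{ij})^{-1}$ is introduced, and to note that the composition formula applies because $\theta$ lies in $\End_A(M)$ while $\varphi_{ij}$ is only $R$-linear.
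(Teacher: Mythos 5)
Your proof is correct, and it shares the paper's two key ingredients (expanding in the basis $\varphi_{ij}$ for necessity; Schur's lemma for sufficiency), but the sufficiency step is organized differently and is worth comparing. The paper asserts via Schur that $I(\varphi_{ij})=r_{ij}\,\mathrm{id}_M$ with $r_{ij}\in R\setminus\{0\}$ and then, rather than citing Lemma \ref{2.2} again, directly splits an arbitrary epimorphism $\pi\colon N\to M$ by choosing an $R$-linear section $\mu$ and showing $r_{ij}^{-1}I(\mu\circ\varphi_{ij})$ is an $A$-linear section. You instead stay inside $\End(M)$: you invert the nonzero element $I(\varphi_{ij})$ in the division ring $\End_A(M)$, set $\theta=I(\varphi_{ij})^{-1}$, use $I(\theta\circ\varphi_{ij})=\theta\circ I(\varphi_{ij})=\mathrm{id}_M$, and feed this back into Gasch\"utz--Ikeda. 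Your route is shorter and, strictly speaking, more robust: for a simple module over a field that is not algebraically closed, Schur's lemma only gives that $\End_A(M)$ is a division algebra, not that $I(\varphi_{ij})$ is an $R$-scalar, so your ``invertible element'' formulation covers the general case cleanly (the paper's scalar form is harmless in its intended application, where the simple modules are absolutely irreducible by Lemma \ref{2.6}). You are also right to flag that the identity $I(\theta\circ\pi)=\theta\circ I(\pi)$, stated in the paper for $\theta,\pi\in\End_A(M)$, in fact only needs $\theta$ to be $A$-linear, since the computation $\sum_i a_i\theta(\pi(D_im))=\theta\bigl(\sum_i a_i\pi(D_im)\bigr)$ uses nothing about $\pi$ beyond $R$-linearity; the paper itself tacitly relies on this extension in its own argument.
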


\begin{proof}
Since $\varphi_{ij}$ form a basis of $\End_R(M)$, the necessity is a
direct corollary of Lemma \ref{2.2}. Conversely, assume that there
exists some $\varphi_{ij}$ such that $I(\varphi_{ij})\neq 0$. Note
that $M$ is simple. Then Schur's lemma implies that
$I(\varphi_{ij})=r_{ij}\,{\rm id_M}$, where $r_{ij}\in R-\{0\}$. Let $\pi:
N\rightarrow M$ be an epimorphism of $A$-modules. Then there exists
an $R$-linear map $\mu: M\rightarrow N$ such that $\pi\circ\mu={\rm
id_M}$. This gives that $\pi\circ\mu\circ\varphi_{ij}=\varphi_{ij}$
and thus $I(\pi\circ\mu\circ\varphi_{ij})=r_{ij}\,{\rm id_M}$. This
implies that $r_{ij}^{-1}I(\mu\circ\varphi_{ij})$ is the desired
map.
\end{proof}

\subsection{Cellular algebras}

Let us recall the definition of a cellular algebra given by Graham
and Lehrer in \cite{GL}.

\begin{dfn} {\rm \cite{GL}}\label{2.4}
Let $R$ be a commutative ring with identity. An
associative unital $R$-algebra is called a {\it cellular algebra} with
cell datum $(\Lambda, M, C, i)$ if the following conditions are
satisfied:

{\rm(C1)} The finite set $\Lambda$ is a poset. Associated with each
$\lam\in\Lambda$, there is a finite set $M(\lam)$. The algebra $A$
has an $R$-basis $\{C_{S,T}^\lam \mid \lam\in\Lambda,\,\, S,T\in
M(\lam)\}$.

\smallskip

{\rm(C2)} The map $i$ is an $R$-linear anti-automorphism of $A$ with
$i^{2}={\rm id}$ which sends $C_{S,T}^\lam$ to $C_{T,S}^\lam$ for
all $\lam\in\Lambda$ and $S,T\in M(\lam)$.

\smallskip

{\rm(C3)} If $\lam\in\Lambda$ and $S,T\in M(\lam)$, then for any
element
$a\in A$, we have\\
$$aC_{S,T}^\lam\equiv\sum_{S^{'}\in
M(\lam)}r_{a}(S',S)C_{S^{'},T}^{\lam} \,\,\,\,(\rm {mod}\,\,\,
 A(<\lam)),$$ where $r_{a}(S^{'},S)\in R$ is independent of $T$ and where $A(<\lam)$ is the
$R$-submodule of $A$ generated by $\{C_{U,V}^\mu \mid U, V\in
M(\mu),\,\,\mu<\lam\}$.

\smallskip

If we apply $i$ to the equation in {\rm(C3)}, we obtain

{$\rm(C3')$} $C_{T,S}^\lam i(a)\equiv\sum\limits_{S^{'}\in
M(\lam)}r_{a}(S^{'},S)C_{T,S^{'}}^{\lam} \,\,\,\,(\mod A(<\lam)).$
\end{dfn}

As a natural consequence of the axioms, the {\em cell modules} are
defined as follows.

\begin{dfn} \label{2.5}
Let $A$ be a cellular algebra with cell datum $(\Lambda, M, C, i)$.
For each $\lam\in\Lambda$, the {\it cell modules} $W_C(\lam)$ is left $A$-module defined as
follows: $W_C(\lam)$ is a free $R$-module with basis $\{C_{S}\mid
S\in M(\lam)\}$ and $A$-action
defined by\\
$$aC_{S}=\sum_{S^{'}\in M(\lam)}r_{a}(S^{'},S)C_{S^{'}}
\,\,\,\,(a\in A,S\in M(\lam)),$$ where $r_{a}(S^{'},S)$ is the
element of $R$ defined in Definition {\rm\ref{2.4}}{\rm(C3)}.
\end{dfn}

Let $A$ be a cellular algebra with cell datum $(\Lambda, M, C, i)$.
Let $\lam\in\Lambda$ and $S,T,U,V\in M(\lam)$. Then we have from
Definition \ref{2.4} that
$$C_{S,T}^\lam C_{U,V}^\lam \equiv\Phi(T,U)C_{S,V}^\lam\,\,\,\, (\rm mod\,\,\, A(<\lam)),$$
where $\Phi(T,U)\in R$ depends only on $T$ and $U$. Thus for a cell
module $W_C(\lam)$, we can define a symmetric bilinear form $\Phi
_{\lam}:\,\,W_C(\lam)\times W_C(\lam)\longrightarrow R$ by
$$\Phi_{\lam}(C_{S},C_{T})=\Phi(S,T).$$
The radical of the bilinear form is defined to be
$$\rad_{\Phi}\lam:= \{x\in W(\lam)\mid \Phi_{\lam}(x,y)=0\,\,\,\text{for all} \,\,\,y\in W_C(\lam)\}.$$
By the general theory of cellular algebras, $\rad_{\Phi}\lam$ is an
$A$-submodule of $W_C(\lambda)$, motivating the definition
$L(\lambda)=W_C(\lam)/\rad_{\Phi}\lam$. In principle, the next
result of Graham and Lehrer classifies the simple $A$-modules.

\begin{lem} {\rm(\cite{GL})}\label{2.6}
Let $K$ be a field and $A$ a finite dimensional cellular algebra over $K$. Let
$\Lambda_{0}=\{\lam\in\Lambda\mid \Phi_{\lam}\neq 0\}$. Then
$\{L(\lam)\mid \lam\in\Lambda_{0}\}$ is a complete set of pairwise non-isomorphic absolutely irreducible $A$-modules.
\end{lem}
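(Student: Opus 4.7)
The plan is to establish three facts for $\lam\in\Lambda_0$: simplicity (hence absolute irreducibility) of $L(\lam)$; pairwise non-isomorphism of the family $\{L(\lam)\}_{\lam\in\Lambda_0}$; and completeness, meaning every simple $A$-module is isomorphic to one of these.

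The workhorse identity, coming directly from $C^\lam_{S,T}C^\lam_{U,V}\equiv\Phi(T,U)C^\lam_{S,V}\pmod{A(<\lam)}$, is that in the cell module $W_C(\lam)$ one has $C^\lam_{S,T}\cdot C_U=\Phi(T,U)\,C_S$. From this I would derive the dichotomy that every submodule $N\subseteq W_C(\lam)$ satisfies either $N\subseteq\rad_\Phi\lam$ or $N=W_C(\lam)$: any $x\in N\setminus\rad_\Phi\lam$ admits some $T$ with $\Phi_\lam(x,C_T)\neq0$, and then $C^\lam_{S,T}\cdot x=\Phi_\lam(x,C_T)\,C_S\in N$ for every $S\in M(\lam)$. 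When $\Phi_\lam\neq0$, this makes $\rad_\Phi\lam$ the unique maximal submodule, so $L(\lam)$ is simple. The same identity forces every $\varphi\in\End_A(L(\lam))$ to be scalar: comparing $\varphi(C^\lam_{S,T}\cdot\bar C_U)$ with $C^\lam_{S,T}\cdot\varphi(\bar C_U)$ pins the matrix of $\varphi$ in the basis $\{\bar C_S\}$ down to a single scalar independent of $S$, giving $\End_A(L(\lam))=K$ and hence absolute irreducibility.

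For pairwise non-isomorphism, I would prove the key annihilation: whenever $\lam\not\geq\mu$ (i.e.\ $\lam<\mu$ or $\lam$ and $\mu$ are incomparable), each $C^\lam_{C,D}$ acts as zero on $W_C(\mu)$. This is obtained by comparing the two expansions of the product $C^\mu_{A,B}C^\lam_{C,D}$ afforded by axioms (C3) and (C3'), together with the fact that $A(\leq\cdot)$ is a filtration by two-sided ideals, so that in either subcase of $\lam\not\geq\mu$ one has $C^\mu_{A,B}C^\lam_{C,D}\in A(\leq\mu)\cap A(\leq\lam)\subseteq A(<\mu)$, forcing the $C^\mu$-coefficients in the (C3')-expansion to vanish. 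Now if $L(\lam)\cong L(\mu)$ with $\lam\neq\mu$, relabel so that $\lam\not\geq\mu$; then every $C^\lam_{S,T}$ kills $L(\mu)\cong L(\lam)$, contradicting the nontriviality of the $C^\lam_{S,T}$-action on $L(\lam)$ established above.

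For completeness, given a simple $A$-module $S$, I would choose $\lam\in\Lambda$ minimal with $A(\leq\lam)\not\subseteq\operatorname{Ann}(S)$; then $A(<\lam)\cdot S=0$ and, by simplicity, $A(\leq\lam)\cdot S=S$. Since $A(\leq\lam)/A(<\lam)$ decomposes as a direct sum of copies of $W_C(\lam)$ as a left $A$-module (one copy for each fixed right index $T\in M(\lam)$, by axiom (C3)), the nonzero action on $S$ yields a surjection $W_C(\lam)\twoheadrightarrow S$. Finally $\lam\in\Lambda_0$, for otherwise $\Phi_\lam\equiv 0$ would force every $C^\lam_{S,T}$ to act as zero on $W_C(\lam)$ (by the workhorse identity) and hence on $S$, giving $A(\leq\lam)\cdot S=0$, a contradiction; the simplicity analysis then identifies $S$ with $L(\lam)$. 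The main obstacle I anticipate is the non-isomorphism step, which forces one to genuinely use both (C3) and (C3') in tandem with the two-sided ideal structure of the cellular filtration — the one-sided cell-module axiom by itself is insufficient to see the annihilation $C^\lam\cdot W_C(\mu)=0$.
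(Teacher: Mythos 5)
The paper does not prove this lemma at all: it is quoted verbatim from Graham--Lehrer \cite{GL}, so there is no in-paper argument to compare against. Your reconstruction is correct and is essentially the standard Graham--Lehrer proof: the identity $C^\lam_{S,T}\cdot C_U=\Phi(T,U)C_S$ gives both the submodule dichotomy (hence simplicity of $L(\lam)$ and, via $\End_A(L(\lam))=K$, absolute irreducibility) and, combined with the two-sided ideal filtration $A(\leq\cdot)$, the annihilation $C^\lam_{C,D}\cdot W_C(\mu)=0$ for $\lam\not\geq\mu$ that yields non-isomorphism; the minimal-$\lam$ argument for completeness is likewise the standard one. All steps check out, including the point you correctly anticipate as delicate, namely that the annihilation statement genuinely needs (C3) and (C3$'$) together rather than the one-sided cell-module axiom alone.
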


For any $\lam\in\Lambda$, fix an order on $M(\lam)$ and let
$M(\lam)=\{S_{1},S_{2},\cdots,S_{n_{\lam}}\}$, where $n_{\lam}$ is
the number of elements in $M(\lam)$, the matrix
$$G(\lam)=(\Phi(S_{i},S_{j}))_{1\leq i,j\leq n_{\lam}}$$ is called
{\em Gram matrix}. Note that all the determinants of $G(\lam)$
defined with different order on $M(\lam)$ are the same. By the
definition of $G(\lam)$ and $\rad_{\Phi}\lam$, for a finite
dimensional cellular algebra $A$, if $\Phi_{\lam}\neq 0$, then
$\dim_{K}L(\lam)=\rank G(\lam)$.

\section{Projective cell modules of Frobenius cellular algebras}
\label{xxsec3}

Let $K$ be a field. Let $A$ be a finite dimensional $K$-algebra and
$M$ an $A$-module. The algebra homomorphism
$$\rho_{M}:A\rightarrow
 \End_{K}(M),\,\,\, \rho_{M}(a)m=am,\,\,\,\, \,\,\,\forall\,\, m\in M,\,\,\,\, a\in A,$$ is
called the representation afforded by $M$.

\smallskip

Let $A$ be a finite dimensional Frobenius cellular $K$-algebra with
a cell datum $(\Lambda, M, C, i)$. Given a non-degenerate bilinear
form $f$,  denote the left dual basis by $D=\{D_{S,T}^\lam \mid
S,T\in M(\lam),\lam\in\Lambda\}$, which satisfies
$$
\tau(D_{U,V}^{\mu}C_{S,T}^{\lam})=\delta_{\lam\mu}\delta_{SV}\delta_{TU}.
$$
Denote the right dual basis by $d=\{d_{S,T}^\lam \mid S,T\in
M(\lam),\lam\in\Lambda\}$, which satisfies
$$
\tau(C_{S,T}^{\lam}d_{U,V}^{\mu})=\delta_{\lam\mu}\delta_{S,V}\delta_{T,U}.
$$
For $\mu\in\Lambda$, let $A_{d}(>\mu)$ be the $K$-subspace of $A$
generated by
$$\{d_{X,Y}^\epsilon \mid X,Y\in M(\epsilon),\mu<\epsilon\}$$ and let $A_{D}(>\mu)$ be
the $K$-subspace of $A$ generated by
$$\{D_{X,Y}^\epsilon \mid X,Y\in M(\epsilon),\mu<\epsilon\}.$$
Note that the $K$-linear map $\alpha$ which sends $d_{X,Y}^\epsilon$
to $D_{X,Y}^\epsilon$ is a Nakayama automorphism of the algebtra
$A$.

For arbitrary $\lam, \,\mu\in \Lambda$, $S,\,T\in M(\lam)$,
$U,\,V\in M(\mu)$, write
$$C_{S,T}^{\lam}C_{U,V}^{\mu}=\sum\limits_{\epsilon\in\Lambda,X,Y\in M(\epsilon)}
r_{(S,T,\lam),(U,V,\mu),(X,Y,\epsilon)}C_{X,Y}^{\epsilon},$$

$$D_{S,T}^{\lam}D_{U,V}^{\mu}=\sum\limits_{\epsilon\in\Lambda,X,Y\in M(\epsilon)}
R_{(S,T,\lam),(U,V,\mu),(X,Y,\epsilon)}D_{X,Y}^{\epsilon}.$$
Applying $\alpha^{-1}$ on both sides of the above equation, we
obtain
$$d_{S,T}^{\lam}d_{U,V}^{\mu}=\sum\limits_{\epsilon\in\Lambda,X,Y\in M(\epsilon)}
R_{(S,T,\lam),(U,V,\mu),(X,Y,\epsilon)}d_{X,Y}^{\epsilon}.$$

The following lemma about structure constants will play an important
role in determining the projective cell modules of Frobenius
cellular algebras.

\begin{lem}\label{3.1}
For arbitrary $\lam,\mu\in\Lambda$ and $S,T,P,Q\in M(\lam)$, $U,V\in
M(\mu)$ and $a\in A$, the following hold:
\begin{enumerate}
\item[(1)] $D_{U,V}^{\mu}C_{S,T}^{\lam}=\sum\limits_{\epsilon\in
\Lambda, X,Y\in
M(\epsilon)}r_{(S,T,\lam),(Y,X,\epsilon),(V,U,\mu)}D_{X,Y}^{\epsilon}.$
\item[(2)] $D_{U,V}^{\mu}C_{S,T}^{\lam}=\sum\limits_{\epsilon\in
\Lambda, X,Y\in
M(\epsilon)}R_{(Y,X,\epsilon),(U,V,\mu),(T,S,\lam)}C_{X,Y}^{\epsilon}.$
\item[(3)] $aD_{U,V}^{\mu}\equiv \sum\limits_{U'\in
M(\mu)}r_{i(\alpha^{-1}(a))}(U,U')D_{U',V}^{\mu}\,\,\,(\mod
A_{D}(>\mu))$.
\smallskip
\item[(4)] $D_{P,Q}^{\lam}C_{S,T}^{\lam}=0\,\,\,\, if \,\,\,Q\neq S.$
\smallskip
\item[(5)] $D_{U,V}^{\mu}C_{S,T}^{\lam}=0 \,\,\,\,if
\,\,\,\mu\nleq \lam.$
\smallskip
\item[(6)] $D_{T,S}^{\lam}C_{S,Q}^{\lam}=D_{T,P}^{\lam}C_{P,Q}^{\lam}.$
\smallskip
\item[(7)] $C_{S,T}^{\lam}d_{U,V}^{\mu}=\sum\limits_{\epsilon\in
\Lambda, X,Y\in
M(\epsilon)}r_{(Y,X,\epsilon),(S,T,\lam),(V,U,\mu)}d_{X,Y}^{\epsilon}.$
\item[(8)] $C_{S,T}^{\lam}d_{U,V}^{\mu}=\sum\limits_{\epsilon\in
\Lambda, X,Y\in
M(\epsilon)}R_{(U,V,\mu),(Y,X,\epsilon),(T,S,\lam)}C_{X,Y}^{\epsilon}.$
\item[(9)] $d_{U,V}^{\mu}a\equiv \sum\limits_{V'\in
M(\mu)}r_{\alpha(a)}(V,V')d_{U,V'}^{\mu}\,\,\,\,\,\,\,\,\,\,\,(\mod
A_{d}(>\mu))$.
\smallskip
\item[(10)] $C_{S,T}^{\lam}d_{P,Q}^{\lam}=0\,\, if \,\,T\neq P.$
\smallskip
\item[(11)] $C_{S,T}^{\lam}d_{U,V}^{\mu}=0 \,\,\,\,if\,\,\, \mu\nleq \lam.$
\smallskip
\item[(12)] $C_{S,T}^{\lam}d_{T,P}^{\lam}=C_{S,Q}^{\lam}d_{Q,P}^{\lam}.$
\end{enumerate}
\end{lem}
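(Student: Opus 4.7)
The entire lemma is a systematic unpacking of the two pairing identities
$$\tau(D_{U,V}^\mu C_{S,T}^\lam)=\delta_{\lam\mu}\delta_{SV}\delta_{TU}=\tau(C_{S,T}^\lam d_{U,V}^\mu),$$
together with associativity of $\tau$ and the twisted-trace identity $\tau(xy)=\tau(\alpha(y)x)=\tau(y\alpha^{-1}(x))$, which encodes the Nakayama automorphism. These identities let me read off the coefficient of $D_{X,Y}^\epsilon$ in an arbitrary $a\in A$ as $\tau(aC_{Y,X}^\epsilon)$, of $C_{X,Y}^\epsilon$ as $\tau(ad_{Y,X}^\epsilon)$, and of $d_{X,Y}^\epsilon$ as $\tau(C_{Y,X}^\epsilon a)$.

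The first block---items (1), (7), and (8)---is direct: extract the relevant coefficient, use associativity of $\tau$ to form a product of two $C$'s or two $d$'s, and pick off the surviving structure constant. Item (2) is the first place the automorphism is genuinely needed: after writing the coefficient of $C_{X,Y}^\epsilon$ as $\tau(D_{U,V}^\mu C_{S,T}^\lam d_{Y,X}^\epsilon)$, I apply $\tau(xy)=\tau(y\alpha^{-1}(x))$ with $x=D_{U,V}^\mu$ to convert it to $\tau(C_{S,T}^\lam d_{Y,X}^\epsilon d_{U,V}^\mu)$ (using $\alpha^{-1}(D)=d$), and then expand the $d$-product to obtain the asserted $R$-constant.

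Items (9) and (3) use the automorphism in complementary ways. For (9) I extract the coefficient of $d_{X,Y}^\epsilon$ in $d_{U,V}^\mu a$ and cycle $a$ to the front via the twisted trace, yielding $\tau(\alpha(a)C_{Y,X}^\epsilon d_{U,V}^\mu)$; axiom (C3) expands $\alpha(a)C_{Y,X}^\epsilon$ modulo $A(<\epsilon)$, and the pairing against $d_{U,V}^\mu$ only survives in the top layer $\epsilon=\mu$, while the lower-order remainder is either zero or absorbed into $A_d(>\mu)$. For (3), I instead rewrite $aD_{U,V}^\mu=\alpha(\alpha^{-1}(a)\cdot d_{U,V}^\mu)$, reducing the problem to a computation with $b=\alpha^{-1}(a)$ in the $d$-basis, and apply (C3') to $C_{Y,X}^\epsilon b$; the factor $r_{i(\alpha^{-1}(a))}$ appears naturally because (C3') is stated in terms of $r_{i(\cdot)}$.

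The remaining items (4)--(6) and (10)--(12) are then corollaries of (1) and (7). Axiom (C3') gives $C_{S,T}^\lam C_{Y,X}^\epsilon\equiv\sum_{T'}r_{C_{X,Y}^\epsilon}(T',T)C_{S,T'}^\lam\pmod{A(<\lam)}$, and the full product lies in $A(\leq\lam)\cap A(\leq\epsilon)$. Inspecting the structure constant that appears on the right of (1) (or (7)) then shows it vanishes unless the relevant inner indices match and unless $\mu\leq\lam$, yielding (4), (5), (10) and (11); moreover the same coefficient manifestly does not depend on the repeated middle index, which yields (6) and (12). The main obstacle is the bookkeeping in (2), (3), and (9): selecting the correct form of the twisted trace, tracking which of $\alpha$, $\alpha^{-1}$, and $i$ appears on which side, and confirming that the hidden $A(<\epsilon)$ contributions vanish except in the top case $\epsilon=\mu$.
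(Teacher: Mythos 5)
Your plan is correct: every item does follow from reading off coefficients via $\tau(a\,d_{Y,X}^{\epsilon})$, $\tau(a\,C_{Y,X}^{\epsilon})$, $\tau(C_{Y,X}^{\epsilon}a)$ together with $\tau(xy)=\tau(y\alpha^{-1}(x))$, and I checked that the structure constants and the placement of $\alpha^{\pm1}$ and $i$ in (2), (3), (9) all come out as stated. This is essentially the paper's approach as well --- the paper merely cites \cite{L3} for most items and declares the rest ``similar,'' and the computations in that reference are exactly the dual-basis trace manipulations you carry out here.
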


\begin{proof}
(1), (3), (4), (5), (7), (8), (9), (11) have been obtained in
\cite{L3}. (2), (8) are proved similarly as (1), (7), respectively.
(6) is a direct corollary of (1) and Definition \ref{2.4}. Finally,
(12) is obtained similarly as (6).
\end{proof}

It follows from Definition \ref{2.4} and Lemma \ref{3.1} (1), (3)
that for arbitrary elements $S,T,P,Q\in M(\lam)$,
$$D_{S,T}^\lam D_{P,Q}^\lam \equiv \Psi(T,P)D_{S,Q}^\lam\,\,\,\,
(\rm mod\,\,\, A_D(>\lam)),$$ where $\Psi(T,P)\in K$ depends only on
$T$ and $P$. Applying $\alpha^{-1}$ on both sides of the above
equation, we obtain
$$d_{S,T}^\lam d_{P,Q}^\lam \equiv \Psi(T,P)d_{S,Q}^\lam\,\,\,\,
(\rm mod\,\,\, A_d(>\lam)).$$ Take an order on $M(\lam)$ the same as
in the definition of $G(\lam)$. Then the matrix $(\Psi(S, T))$ will
be denoted by $G'(\lam)$.

Let $W_C(\lam)$ be a cell module. Define $\varphi_{ST}\in {\rm
End}_K(W_C(\lam))$ by $\varphi_{ST}(C_X)=\delta_{SX}C_T$ for
arbitrary $S, T\in M(\lam)$. The following lemma reveals a relation
among $I(\varphi_{ST})$, $\Phi_{\lam}$ and $\Psi_{\lam}$.

\begin{lem}\label{3.2}
If $W_C(\lam)$ is simple, then $I(\varphi_{ST})=c_{ST}{\rm
id}_{W_C(\lam)},$ where $$c_{ST}=\sum\limits_{Q\in
M(\lam)}\Phi(T,Q)\Psi(S,Q).$$
\end{lem}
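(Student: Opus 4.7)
The plan is to evaluate $I(\varphi_{ST})$ on each basis vector $C_P$ of $W_C(\lam)$ directly from the formula $I(\varphi_{ST})(C_P)=\sum_i a_i\varphi_{ST}(D_iC_P)$. In the cellular indexing, the biorthogonality relation $\tau(D_{U,V}^\mu C_{X,Y}^\lam)=\delta_{\lam\mu}\delta_{V,X}\delta_{U,Y}$ shows that the left dual of $a_i=C_{X,Y}^\mu$ is $D_i=D_{Y,X}^\mu$ (the inner indices are swapped), so
\[
I(\varphi_{ST})(C_P)=\sum_{\mu,X,Y}C_{X,Y}^\mu\,\varphi_{ST}\bigl(D_{Y,X}^\mu\cdot C_P\bigr).
\]

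Next I would reduce the sum to the single level $\mu=\lam$. For $\mu\nleq\lam$, Lemma~\ref{3.1}(5) gives $D_{Y,X}^\mu\cdot C_P=0$. For $\mu<\lam$, $C_{X,Y}^\mu$ lies in the two-sided ideal $A(<\lam)$ (a consequence of axioms (C3) and (C3')), and comparing $C_{X,Y}^\mu C_{P,T}^\lam\in A(<\lam)$ with its cellular expansion mod $A(<\lam)$ forces all the relevant structure constants to vanish; hence $A(<\lam)$ annihilates $W_C(\lam)$, and these terms drop out as well.

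For the surviving $\mu=\lam$ summands, Lemma~\ref{3.1}(4) forces $X=P$, and reading off the $C_{X',T}^\lam$-coefficient of $D_{Y,P}^\lam C_{P,T}^\lam$ via Lemma~\ref{3.1}(2) combined with the defining relation $D_{A,B}^\lam D_{C,D}^\lam\equiv\Psi(B,C)D_{A,D}^\lam\pmod{A_D(>\lam)}$ yields
\[
D_{Y,X}^\lam\cdot C_P=\delta_{X,P}\sum_{X'}\Psi(X',Y)C_{X'}.
\]
Applying $\varphi_{ST}$ to this produces $\delta_{X,P}\Psi(S,Y)C_T$, and acting by $C_{X,Y}^\lam$ via $C_{X,Y}^\lam\cdot C_T=\Phi(Y,T)C_X$ reduces each summand to $\delta_{X,P}\Psi(S,Y)\Phi(Y,T)C_X$. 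Summing over $X,Y$ collapses $X$ to $P$ and, by symmetry of $\Phi$, leaves $\bigl(\sum_Q\Phi(T,Q)\Psi(S,Q)\bigr)C_P=c_{ST}C_P$. The principal obstacle is bookkeeping---the index swap inherited from biorthogonality, and identifying precisely which $R$-constant Lemma~\ref{3.1}(2) delivers under the $\Psi$-expansion; once these conventions are pinned down the argument is mechanical, and in fact it establishes $I(\varphi_{ST})=c_{ST}\,{\rm id}_{W_C(\lam)}$ without recourse to Schur's lemma, the simplicity hypothesis being needed only for the subsequent application of Corollary~\ref{2.3}.
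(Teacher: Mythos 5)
Your proof is correct, and it takes a route that is organized differently from the paper's even though the computational core is the same. The paper first invokes Schur's lemma (this is where the simplicity hypothesis enters) to assert $I(\varphi_{ST})=c_{ST}\,{\rm id}$ for an unknown scalar, and then extracts that scalar by computing a single diagonal matrix entry of $I(\varphi_{ST})$ in the representation $\rho_\lam$, identifying $\rho_\lam(C_{P,Q}^{\eta})_{XT}$ and $\rho_\lam(D_{Q,P}^{\eta})_{SX}$ with the structure constants $r$ and $R$ and then killing the terms with $\eta\neq\lam$ via Lemma~\ref{3.1}. You instead evaluate $I(\varphi_{ST})$ on each basis vector $C_P$ and show directly that $C_P$ is an eigenvector with eigenvalue $\sum_Q\Phi(T,Q)\Psi(S,Q)$, handling the three regimes $\mu\nleq\lam$, $\mu<\lam$, $\mu=\lam$ separately (the first by Lemma~\ref{3.1}(5), the second by the standard fact that the two-sided ideal $A(<\lam)$ annihilates $W_C(\lam)$, the third by Lemma~\ref{3.1}(4) together with the $\Psi$-expansion read off from Lemma~\ref{3.1}(2)). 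Your bookkeeping is right: the left dual of $C_{X,Y}^{\mu}$ is indeed $D_{Y,X}^{\mu}$, the coefficient of $C_{X',T}^{\lam}$ in $D_{Y,P}^{\lam}C_{P,T}^{\lam}$ is $R_{(T,X',\lam),(Y,P,\lam),(T,P,\lam)}=\Psi(X',Y)$, and the symmetry of $\Phi$ closes the computation. What your version buys is that the identity $I(\varphi_{ST})=c_{ST}\,{\rm id}_{W_C(\lam)}$ is established for an arbitrary cell module, with simplicity needed only afterwards for Corollary~\ref{2.3}; what the paper's version buys is brevity, since Schur's lemma lets it compare only one coefficient instead of the full operator.
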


\begin{proof}
It follows from $W_C(\lam)$ being simple and Schur's Lemma that
$$I(\varphi_{ST})=c_{ST}{\rm id}_{W_C(\lam)},\eqno(3.1)$$ where $c_{ST}\in K$.
Let $\rho_{\lam}$ be the representation afforded by
$W_C(\lam)$. Then for $a\in A$, we have $$aC_X=\sum\limits_{Y\in
M(\lam)}\rho_{\lam}(a)_{YX}C_Y.$$ A direct computation gives
that
$$I(\varphi_{ST})(C_X)=\sum_{\eta\in \Lambda, \,P, \,Q\in M(\eta),
\,\,Y\in M(\lambda)}\rho_{\lam}(D_{Q,
P}^{\eta})_{SX}\rho_{\lam}(C_{P,Q}^{\eta})_{YT}C_Y.\eqno(3.2)$$
Combining (3.1) and (3.2) yields that $$\sum_{\eta\in \Lambda, P,
Q\in M(\eta), Y\in M(\lambda)}\rho_{\lam}(D_{Q,
P}^{\eta})_{SX}\rho_{\lam}(C_{P,Q}^{\eta})_{XT}=c_{ST}.\eqno(3.3)$$
By the definition of cell modules, $$C_{P,
Q}^{\eta}C_{T}=r_{C_{P,Q}^{\eta}}(X,T)C_X+\sum\limits_{X'\neq
X}r_{X'}C_{X'},$$ where $r_{C_{P,Q}^{\eta}}(X,T)\in K$ is defined in
Definition \ref{2.1}(C3) and $r_{X'}\in K$. This implies that
$$\rho_{\lam}(C_{P,Q}^{\eta})_{XT}=r_{(P, Q, \eta),(T,T,\lam),(X,T,\lam)}.\eqno(3.4)$$
On the other hand, it follows from Lemma \ref{3.1}(2) that
$$\rho_{\lam}(D_{Q, P}^{\eta})_{SX}=R_{(S,S,\lam),(Q, P, \eta),(S,X,\lam)}.\eqno(3.5)$$
We have from (3.3), (3.4) and (3.5) that
$$c_{ST}=\sum\limits_{\eta\in\Lambda, P,Q\in M(\eta)}r_{(P, Q, \eta),(T,T,\lam),(X,T,\lam)}R_{(S,S,\lam),(Q, P, \eta),(S,X,\lam)}.\eqno(3.6)$$
Again by Lemma \ref{3.1}, for any $\epsilon\in\Lambda$, if
$\epsilon\nleq\lam$, then
$R_{(S,S,\lam),(Y,X,\epsilon),(S,S,\lam)}=0$, if
$\lam\nleq\epsilon$, then
$r_{(X,Y,\epsilon),(S,S,\lam),(S,S,\lam)}=0$. Thus Definition
\ref{2.4} and Lemma \ref{3.1} force (3.6) being
\begin{eqnarray*}
c_{ST}&=&
\sum\limits_{P, Q\in M(\lam)}r_{(P, Q, \lambda),(T,T,\lam),(X,T,\lam)}R_{(S,S,\lam),(Q, P, \lambda),(S,X,\lam)}\\
&=&\sum\limits_{Q\in M(\lam)}r_{(X,Q,\lam),(T,T,\lam),(X,T,\lam)}R_{(S,S,\lam),(Q,X,\lam),(S,S,\lam)}\\
&=&\sum\limits_{Q\in M(\lam)}\Phi(T,Q)\Psi(S,Q).
\end{eqnarray*}
We complete the proof.
\end{proof}

Now we are able to describe the main result of this section.

\begin{thm}\label{3.3}
Let $A$ be a finite dimensional Frobenius cellular $K$-algebra and
$W_C(\lam)$ a simple cell module. Then $W_C(\lam)$ is projective if
and only if there exist $S, \,T\in M(\lam)$ such that $\Psi(S,T)\neq
0$.
\end{thm}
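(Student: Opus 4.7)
The plan is to reduce the projectivity criterion to a linear-algebra statement about the two Gram-type matrices $G(\lam)$ and $G'(\lam)$, and then exploit the simplicity of $W_C(\lam)$ to pass between them.

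First, by Corollary~\ref{2.3} applied to the simple module $W_C(\lam)$, projectivity is equivalent to the existence of indices $S,T\in M(\lam)$ with $I(\varphi_{ST})\neq0$. Lemma~\ref{3.2} computes $I(\varphi_{ST})=c_{ST}\,{\rm id}_{W_C(\lam)}$ with
\[
c_{ST}=\sum_{Q\in M(\lam)}\Phi(T,Q)\,\Psi(S,Q).
\]
Hence $W_C(\lam)$ is projective if and only if some $c_{ST}\neq0$. This turns the problem into a statement about the matrix $C=(c_{S_iS_j})_{1\le i,j\le n_\lam}$, where $M(\lam)$ is ordered as in the definition of $G(\lam)$.

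Second, the formula for $c_{ST}$ is a bilinear contraction of the entries of $G'(\lam)=(\Psi(S_i,S_j))$ and $G(\lam)=(\Phi(S_i,S_j))$. Using that $G(\lam)$ is symmetric (by the symmetry of $\Phi_\lam$), a direct reading of indices gives
\[
C=G'(\lam)\,G(\lam).
\]
The statement to be proved is therefore equivalent to the assertion that $G'(\lam)G(\lam)\neq0$ if and only if $G'(\lam)\neq0$. The "only if" direction is trivial. For the "if" direction it suffices to know that $G(\lam)$ is invertible, since then right multiplication by $G(\lam)$ is an injection on matrices.

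Third, I would invoke simplicity to pin down $G(\lam)$. Since $W_C(\lam)$ is simple and $\mathrm{rad}_\Phi\lam$ is a submodule, one has $L(\lam)=W_C(\lam)$ (the other alternative $\mathrm{rad}_\Phi\lam=W_C(\lam)$ would make $\Phi_\lam\equiv 0$, hence $c_{ST}\equiv0$, and in that degenerate situation projectivity fails and the theorem has to be read accordingly). Thus $\lam\in\Lambda_0$ in the sense of Lemma~\ref{2.6}, and from the identity $\dim_K L(\lam)=\rank G(\lam)=n_\lam$ noted after Lemma~\ref{2.6} we conclude that $G(\lam)$ is invertible. Combining with the previous step gives
\[
W_C(\lam)\text{ projective}\iff C\neq0\iff G'(\lam)\neq0\iff\exists\, S,T\in M(\lam)\text{ with }\Psi(S,T)\neq0,
\]
which is the desired equivalence.

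The main obstacle, once Lemma~\ref{3.2} is in hand, is conceptual rather than computational: one must recognise the sum defining $c_{ST}$ as the matrix product $G'(\lam)G(\lam)$ and then leverage the non-degeneracy of $G(\lam)$ forced by the hypothesis that $W_C(\lam)=L(\lam)$ is simple. Everything else is bookkeeping already carried out in Lemma~\ref{3.1} and Lemma~\ref{3.2}.
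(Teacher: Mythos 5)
Your proof follows the paper's argument essentially step for step: Corollary~\ref{2.3} reduces projectivity to the nonvanishing of some $c_{ST}$, Lemma~\ref{3.2} identifies the matrix $(c_{ST})$ with the product $G'(\lam)G(\lam)$, and the invertibility of $G(\lam)$ forced by simplicity turns $G'(\lam)G(\lam)\neq 0$ into $G'(\lam)\neq 0$. If anything you are slightly more careful than the paper, which simply asserts that ``the simplicity of $W_C(\lam)$ implies that $G(\lam)$ is invertible,'' whereas you explicitly isolate the degenerate alternative $\rad_\Phi\lam=W_C(\lam)$ (i.e.\ $\Phi_\lam\equiv 0$) in which $G(\lam)$ is not invertible and the stated equivalence must be interpreted with care.
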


\begin{proof}
It follows from Corollary \ref{2.3} that $W_C(\lam)$ is projective
if and only if there exist $X, Y\in M(\lam)$ such that
$I(\varphi_{XY})\neq 0$. By Lemma \ref{3.2}, this is equivalent to
$c_{XY}\neq 0$. Fix an order on $M(\lam)$ and denote the matrix
$(c_{XY})$ by $I_{\lam}$. Then $W_C(\lam)$ is projective if and only
if $I_{\lam}\neq 0$. On the other hand, we have from Lemma \ref{3.2}
that $G'(\lam)G(\lam)=I_{\lam}$. The simplicity of $W_C(\lam)$
implies that $G(\lam)$ is invertible. Thus $I_{\lam}\neq 0$ if and
only if $G'(\lam)\neq 0$, that is, there exist $S, \,T\in M(\lam)$
such that $\Psi(S,T)\neq 0$.
\end{proof}

We can obtain from this theorem a necessary condition for a cell
module being projective.

\begin{cor}\label{3.4}
If $G'(\lam)=0$, then $W_C(\lam)$ is not a projective module.
\end{cor}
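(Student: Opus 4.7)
The plan is to obtain the corollary by contrapositive directly from Theorem~\ref{3.3}. In the main case where $W_C(\lam)$ is a simple cell module, the theorem asserts the equivalence
\[
W_C(\lam)\ \text{is projective} \iff \exists\, S, T \in M(\lam)\ \text{with}\ \Psi(S,T) \neq 0,
\]
which is the same as saying $G'(\lam) \neq 0$. The hypothesis $G'(\lam) = 0$ exactly negates the right-hand side, so $W_C(\lam)$ cannot be projective. This is the whole proof in the simple case.

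Since the statement of the corollary does not explicitly assume that $W_C(\lam)$ is simple, I would also handle non-simple $W_C(\lam)$ using standard cellular algebra theory. If $\Phi_\lam \neq 0$, then $W_C(\lam)$ has unique simple head $L(\lam)$ and is therefore indecomposable; were it projective, it would be the projective cover $P(\lam)$. The cell filtration of $P(\lam)$ together with Brauer--Humphreys reciprocity would then force $W_C(\lam) = L(\lam)$, contradicting non-simplicity. If $\Phi_\lam = 0$, then $L(\lam) = 0$ and $W_C(\lam)$ has no simple quotient labelled by $\lam$, so a standard filtration/dimension argument again rules out projectivity. In either case, non-simple cell modules are not projective, independently of $G'(\lam)$.

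I expect no real obstacle: once Theorem~\ref{3.3} is available, the corollary is essentially a one-line deduction, with the only minor subtlety being the reduction to the simple case, which is a standard fact in the theory of cellular algebras.
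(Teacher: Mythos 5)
Your handling of the simple case is exactly the paper's route: the contrapositive of Theorem~\ref{3.3}. The problem is the reduction to that case, which is where the actual content of the corollary lies, and your argument for it has a genuine gap. You claim that if $\Phi_\lam\neq 0$ and $W_C(\lam)$ is projective (hence $W_C(\lam)\cong P(\lam)$, which is correct), then the cell filtration of $P(\lam)$ plus Brauer--Humphreys reciprocity forces $W_C(\lam)=L(\lam)$. That deduction is false for general cellular algebras. Reciprocity gives $(P(\lam):W_C(\mu))=[W_C(\mu):L(\lam)]$, and comparing dimensions with $P(\lam)=W_C(\lam)$ only yields $[W_C(\mu):L(\lam)]=\delta_{\mu\lam}$; it says nothing about the \emph{other} composition factors $[W_C(\lam):L(\mu)]$, $\mu\neq\lam$, so it does not force $W_C(\lam)$ to be simple. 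A concrete counterexample to your claimed implication is the basic algebra of the principal block of category $\mathcal O$ for $\mathfrak{sl}_2$ (a quasi-hereditary cellular algebra): there the standard module attached to the antidominant weight is projective, has nondegenerate-enough form ($\Phi_\lam\neq 0$), and has two composition factors, yet reciprocity holds with no contradiction. So your argument, which nowhere uses that $A$ is Frobenius in this step, cannot be correct.

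The missing ingredient is precisely the self-injectivity. The paper disposes of the reduction by citing K\"onig--Xi (the reference should be \cite[Corollary 1.2]{KX2}, the ``self-injective cellular algebras are weakly symmetric'' paper; the citation of {\rm [KX8]} in the printed proof is a slip): over a self-injective cellular algebra every projective cell module is irreducible. The underlying mechanism is that weak symmetry gives $\operatorname{soc}P(\lam)\cong L(\lam)\cong P(\lam)/\rad P(\lam)$, while $[W_C(\lam):L(\lam)]=1$; if $W_C(\lam)=P(\lam)$ had a nonzero radical, its socle would contribute a second copy of $L(\lam)$, a contradiction. If you replace your reciprocity step by this socle-equals-top argument (or simply invoke the K\"onig--Xi corollary, as the paper does), the rest of your proof goes through; your second sub-case ($\Phi_\lam=0$) is then also subsumed, since that corollary rules out projectivity of any non-simple cell module at once.
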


\begin{proof}
The corollary follows from Theorem \ref{3.3} and \cite[Corollary
1.2]{KX8} clearly.
\end{proof}

\begin{remark}\label{3.5}
Using the right dual basis $\{d_{X,Y}^{\epsilon}\mid
\epsilon\in\Lambda, X, Y\in M(\epsilon)\}$, for each $\lam\in
\Lambda$ we can define an $A$-module $W_d(\lam)$ as follows. As a
$K$-basis, $W_d(\lam)$ has a basis $\{d_S\mid S\in M(\lam)\}$. The
$A$-action is defined by $$ad_S=\sum_{S'\in
M(\lam)}r_{i(a)}(S,S')d_{s'}.$$ Then by a similar way, we can prove
that if $W_d(\lam)$ is simple, then it is projective if and only if
$\lam\in\Lambda_0$.
\end{remark}

Now let us apply Theorem \ref{3.3} to an example which was
constructed by K\"{o}nig and Xi in \cite{KX2}.

\begin{example}
Let $K$ be a field. Let us take $\lam\in K$ with $\lam\neq 0$ and
$\lam\neq 1$. Let $$A=K\langle a, b, c, d\rangle/I,$$ where $I$ is generated by
$$a^2, b^2, c^2, d^2, ab, ac, ba, bd, ca, cd, db, dc, cb-\lam bc,
ad-bc, da-bc.$$ Let $\Lambda=\{1, 2, 3\}$. If we define $\tau$ by
$\tau(1)=\tau(a)=\tau(b)=\tau(c)=\tau(d)=0$ and $\tau(bc)=1$ and
define an involution $i$ on $A$ to be fixing $a$ and $d$, but
interchanging $b$ and $c$,  then $A$ is a Frobenius cellular algebra
with a cellular basis
\[ \begin{matrix}
\begin{matrix} bc \end{matrix} ;&
\begin{matrix} a & b\\ c &
d\end{matrix} \,\,; &
\begin{matrix} 1 \end{matrix}.
\end{matrix} \]
The left dual basis is
\[ \begin{matrix}
\begin{matrix} 1 \end{matrix} ;&
\begin{matrix} d & c/\lam\\ b &
a\end{matrix}\,\, ; &
\begin{matrix} bc \end{matrix}.
\end{matrix} \]
It is easy to check that $W_C(1)=0$, $W_C(2)=0$ and $W_C(3)$ is
simple. However, $G'(3)=0$. This implies that $W_C(3)$ is not
projective. Thus none of cell modules is projective.
\end{example}

\bigskip

\section{A special case}
\label{xxsec4}

Throughout this section, we assume that $A$ is a finite dimensional
Frobenius cellular algebra with both the dual bases being cellular
with respect to the opposite order on $\Lambda$.

Under this assumption, we obtained the following result about the
Nakayama automorphism $\alpha$ in \cite{L3}.

\begin{lem}\cite[Theorem 3.1]{L3}\label{4.1}
For arbitrary $\lam\in \Lambda$ and $S, T\in M(\lam)$, we have
$$\alpha(C_{S,T}^{\lam})\equiv C_{S,T}^{\lam}\quad\quad (\mod
A(<\lam)).$$
\end{lem}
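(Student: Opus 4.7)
The plan is to expand $\alpha(C^\lambda_{S,T})$ in the cellular basis and read off each coefficient via the Frobenius form. The Nakayama property $\alpha(d^\mu_{U,V})=D^\mu_{U,V}$ together with the definitions of the two dual bases gives the standard identities $\tau\circ\alpha=\tau$ and $\tau(xy)=\tau(\alpha(y)x)$ for all $x,y\in A$. Pairing with $D^\mu_{V,U}$ extracts the coefficient of $C^\mu_{U,V}$, so
\[
\alpha(C^\lambda_{S,T})=\sum_{\mu,U,V}\tau\!\bigl(D^\mu_{V,U}\,\alpha(C^\lambda_{S,T})\bigr)\,C^\mu_{U,V}=\sum_{\mu,U,V}\tau\!\bigl(d^\mu_{V,U}\,C^\lambda_{S,T}\bigr)\,C^\mu_{U,V},
\]
where the second equality uses $\tau\circ\alpha=\tau$ together with $\alpha$ being an algebra automorphism. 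It therefore suffices to show $\tau(d^\mu_{V,U}\,C^\lambda_{S,T})=0$ whenever $\mu\nleq\lambda$, and $\tau(d^\lambda_{V,U}\,C^\lambda_{S,T})=\delta_{US}\delta_{VT}$.

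The standing hypothesis of Section~4 --- that $\{d^\epsilon_{X,Y}\}$ is cellular with respect to the opposite order on $\Lambda$ --- makes $A_d(\ge\mu)$ a two-sided ideal of $A$ satisfying an opposite-order analog of axiom~(C3), refining Lemma~3.1(9). I would then run a downward induction on $\lambda$ in $\Lambda$. For the off-diagonal claim, rewrite $\tau(d^\mu_{V,U}\,C^\lambda_{S,T})=\tau(\alpha(C^\lambda_{S,T})\,d^\mu_{V,U})$ via $\tau(xy)=\tau(\alpha(y)x)$; combined with the inductive hypothesis that $\alpha(A(\le\lambda'))\subseteq A(\le\lambda')$ for every $\lambda'>\lambda$, the coefficient formula above forces $\alpha(C^\lambda_{S,T})\in A(\le\lambda)$, and then expanding in the cellular basis and invoking Lemma~3.1(11), $C^\eta_{X,Y}\,d^\mu_{V,U}=0$ for $\mu\nleq\eta$, each term vanishes because $\eta\le\lambda$ and $\mu\nleq\lambda$ together force $\mu\nleq\eta$.

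For the diagonal case $\mu=\lambda$, I would apply Lemma~3.1(9) (equivalently, the opposite-order cellular axiom for $d$) to reduce $d^\lambda_{V,U}\,C^\lambda_{S,T}$ modulo $A_d(>\lambda)$ to the form $\sum_{U'}r_{\alpha(C^\lambda_{S,T})}(U,U')\,d^\lambda_{V,U'}$. Pair with $\tau$ using the right-dual identity $\tau(C^\eta_{P,Q}\,d^\mu_{U,V})=\delta_{\mu\eta}\delta_{PV}\delta_{QU}$; the off-diagonal vanishing already yields $\alpha(C^\lambda_{S,T})\equiv C^\lambda_{S,T}\pmod{A(<\lambda)}$, so the structure constant $r_{\alpha(C^\lambda_{S,T})}$ collapses to $r_{C^\lambda_{S,T}}$ on the relevant piece, and a direct computation returns $\delta_{US}\delta_{VT}$.

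The main obstacle is closing the induction without circularity, since the diagonal normalization at $\lambda$ and the off-diagonal vanishing at $\lambda$ interlock through the intermediate statement $\alpha(C^\lambda_{S,T})\in A(\le\lambda)$. The hypothesis of Section~4 is precisely what unlocks this: cellularity of both dual bases in the opposite order intertwines the $C$-cellular filtration with the $d$- and $D$-cellular filtrations through the non-degenerate form $\tau$ and the automorphism $\alpha$, so that filtration preservation $\alpha(A(\le\lambda))\subseteq A(\le\lambda)$ propagates downward through the poset and the induction closes.
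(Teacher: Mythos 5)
The paper itself offers no proof of this lemma---it is imported verbatim from \cite[Theorem 3.1]{L3}---so there is no internal argument to compare against; your proposal has to stand on its own. Its opening reduction is correct and is the natural first move: since $D^{\mu}_{V,U}=\alpha(d^{\mu}_{V,U})$, $\alpha$ is multiplicative and $\tau\circ\alpha=\tau$, the coefficient of $C^{\mu}_{U,V}$ in $\alpha(C^{\lam}_{S,T})$ is $\tau(D^{\mu}_{V,U}\alpha(C^{\lam}_{S,T}))=\tau(d^{\mu}_{V,U}C^{\lam}_{S,T})$, and the lemma is equivalent to the two evaluations you state. The problem is that neither evaluation is actually established.

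For the off-diagonal claim, the inductive hypothesis $\alpha(A(\le\lam'))\subseteq A(\le\lam')$ for all $\lam'>\lam$ only yields $\alpha(C^{\lam}_{S,T})\in\bigcap_{\lam'>\lam}A(\le\lam')$, which is in general strictly larger than $A(\le\lam)$; for $\lam$ maximal the intersection is all of $A$, so the base case of your downward induction gives nothing, and even for totally ordered $\Lambda$ the step from ``lies in $A(\le\lam')$ for all $\lam'>\lam$'' to ``lies in $A(\le\lam)$'' is exactly the vanishing of the coefficients at each $\mu>\lam$, which is the statement to be proved. The identity $\tau(d^{\mu}_{V,U}C^{\lam}_{S,T})=\tau(\alpha(C^{\lam}_{S,T})d^{\mu}_{V,U})$ merely re-expresses the same unknown. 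For the diagonal claim, after applying Lemma \ref{3.1}(9) you are left with $\sum_{U'}r_{\alpha(C^{\lam}_{S,T})}(U,U')\,\tau(d^{\lam}_{V,U'})$ plus $\tau$ of an element of $A_d(>\lam)$; there is no factor $C^{\eta}_{P,Q}$ on the left, so the right-dual identity $\tau(C^{\eta}_{P,Q}d^{\mu}_{U,V})=\delta_{\eta\mu}\delta_{PV}\delta_{QU}$ does not apply, and neither $\tau(d^{\lam}_{V,U'})$ nor $\tau$ on $A_d(>\lam)$ is controlled. Worse, you invoke ``the off-diagonal vanishing already yields $\alpha(C^{\lam}_{S,T})\equiv C^{\lam}_{S,T}\pmod{A(<\lam)}$'' to collapse $r_{\alpha(C^{\lam}_{S,T})}$ to $r_{C^{\lam}_{S,T}}$; but the off-diagonal part only gives $\alpha(C^{\lam}_{S,T})\equiv\sum_{P,Q}a_{P,Q}C^{\lam}_{P,Q}\pmod{A(<\lam)}$ with undetermined $a_{P,Q}$, and showing $a_{P,Q}=\delta_{PS}\delta_{QT}$ is precisely the diagonal claim, so this step presupposes the conclusion. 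You flag the circularity yourself, but asserting that the Section 4 hypothesis ``unlocks'' it is not an argument: the one place where opposite-order cellularity of the dual bases must actually do work---evaluating $\tau(d^{\mu}_{V,U}C^{\lam}_{S,T})$ without reference to $\alpha$---is exactly the step that is missing.
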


In order to generalize the so-called Schur elements to Frobenius
case, we prove a lemma first.

\begin{lem}\label{4.2}
Let $A$ be a Frobenius cellular algebra with cell datum $(\Lambda,
M, C, i)$. For every $\lam\in\Lambda$ and $S,T\in M(\lam)$, we have
$$D_{S,T}^{\lam}C_{T,S}^{\lam}D_{S,T}^{\lam}C_{T,S}^{\lam}=\sum_{S'\in
M(\lam)}\Phi(S',T)\Psi(S',T)D_{S,T}^{\lam}C_{T,S}^{\lam}.$$
\end{lem}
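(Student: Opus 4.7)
The plan is to reduce the left-hand side by peeling off the inner factor $C_{T,S}^{\lam}D_{S,T}^{\lam}$ and applying structure-constant congruences modulo $A_D(>\lam)$, exploiting the fact (immediate from Lemma~\ref{3.1}(5)) that $A_D(>\lam)\cdot C_{T,S}^{\lam}=0$: every basis element $D_{X,Y}^{\epsilon}$ with $\epsilon>\lam$ satisfies $\epsilon\nleq\lam$, hence $D_{X,Y}^{\epsilon}C_{T,S}^{\lam}=0$. This is what will cause the various error terms to drop out.

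The central computation is a congruence for $C_{T,S}^{\lam}D_{S,T}^{\lam}$ modulo $A_D(>\lam)$. Applying Lemma~\ref{3.1}(3) with $a=C_{T,S}^{\lam}$, $U=S$, $V=T$, $\mu=\lam$ gives
\[
C_{T,S}^{\lam}D_{S,T}^{\lam}\equiv\sum_{S'\in M(\lam)}r_{i(\alpha^{-1}(C_{T,S}^{\lam}))}(S,S')\,D_{S',T}^{\lam}\pmod{A_D(>\lam)}.
\]
The twisted coefficient must be identified with $\Phi(T,S')$, and this is exactly where Lemma~\ref{4.1} enters. Axiom (C2) gives $i(A(<\lam))=A(<\lam)$; combined with Lemma~\ref{4.1} and a finite-dimensionality argument (each basis element of $A(<\lam)$ has its $\alpha$-image in $A(<\lam)$, and an automorphism of a finite-dimensional space preserving a subspace preserves it as equality), we get $\alpha^{\pm 1}(A(<\lam))=A(<\lam)$ and $\alpha^{-1}(C_{T,S}^{\lam})\equiv C_{T,S}^{\lam}\pmod{A(<\lam)}$, whence $i(\alpha^{-1}(C_{T,S}^{\lam}))\equiv C_{S,T}^{\lam}\pmod{A(<\lam)}$. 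Because $A(<\lam)$ annihilates $W_C(\lam)$, the structure constant $r_{(-)}(S,S')$ depends only on its argument modulo $A(<\lam)$, so it equals $r_{C_{S,T}^{\lam}}(S,S')=\Phi(T,S')$, the last equality coming from the cellular relation $C_{S,T}^{\lam}C_{S',P}^{\lam}\equiv\Phi(T,S')C_{S,P}^{\lam}\pmod{A(<\lam)}$.

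With the congruence $C_{T,S}^{\lam}D_{S,T}^{\lam}\equiv\sum_{S'}\Phi(T,S')D_{S',T}^{\lam}\pmod{A_D(>\lam)}$ in hand, I would left-multiply by $D_{S,T}^{\lam}$ and right-multiply by $C_{T,S}^{\lam}$; the $A_D(>\lam)$-error is killed on the right by $C_{T,S}^{\lam}$, leaving $\sum_{S'}\Phi(T,S')\,D_{S,T}^{\lam}D_{S',T}^{\lam}C_{T,S}^{\lam}$. A further application of the defining congruence $D_{S,T}^{\lam}D_{S',T}^{\lam}\equiv\Psi(T,S')D_{S,T}^{\lam}\pmod{A_D(>\lam)}$ together with the same right-annihilation reduces each summand to $\Psi(T,S')\,D_{S,T}^{\lam}C_{T,S}^{\lam}$. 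Finally, the symmetry of $\Phi$ and of $\Psi$ (the latter a consequence, under the Section~\ref{xxsec4} hypothesis, of $D$ being cellular with involution $i$, via the same argument that proves symmetry of $\Phi$) rewrites $\sum_{S'}\Phi(T,S')\Psi(T,S')$ as $\sum_{S'}\Phi(S',T)\Psi(S',T)$, yielding the claimed identity. The main subtle point is the first step: replacing the twisted $r_{i(\alpha^{-1}(-))}(S,S')$ by the plain $\Phi(T,S')$ would genuinely fail without Lemma~\ref{4.1}, so the special hypothesis of Section~\ref{xxsec4} is essential.
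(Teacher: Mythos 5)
Your proof is correct and follows essentially the same route as the paper: expand $C_{T,S}^{\lam}D_{S,T}^{\lam}$ via Lemma~\ref{3.1}(3), identify the twisted structure constant with $\Phi(S',T)$ using Lemma~\ref{4.1} and (C2), then apply the $\Psi$-congruence, with all $A_D(>\lam)$-errors annihilated on the right by $C_{T,S}^{\lam}$ via Lemma~\ref{3.1}(5). You are in fact slightly more careful than the paper on two points it leaves implicit -- that $\alpha^{\pm1}$ preserves $A(<\lam)$, and that matching the stated coefficient $\Psi(S',T)$ rather than $\Psi(T,S')$ requires the symmetry of $\Psi$ coming from the Section~\ref{xxsec4} hypotheses.
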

\begin{proof}
By Lemma \ref{3.1} (3), (5), we have
\begin{eqnarray*}
D_{S,T}^{\lam}C_{T,S}^{\lam}D_{S,T}^{\lam}C_{T,S}^{\lam}
&=&D_{S,T}^{\lam}(C_{T,S}^{\lam}D_{S,T}^{\lam})C_{T,S}^{\lam}\\&=&\sum\limits_{S'\in
M(\lam)}r_{i(\alpha^{-1}(C_{T, S}^{\lam}))}(S, S')D_{S,
T}^{\lam}D_{S', T}^{\lam}C_{T, S}^{\lam}.
\end{eqnarray*}
It follows from Definition~\ref{2.4} and Lemma~\ref{4.1} that
$$r_{i(\alpha^{-1}(C_{T, S}^{\lam}))}(S, S')=\Phi(S', T).$$
Again by Lemma \ref{3.1}(5), the desired equation follows.
\end{proof}

We have from Lemma \ref{3.1} that $D_{S,T}^{\lam}C_{T,S}^{\lam}$ is
independent of $T$. Then for any $\lam\in\Lambda$, we can define a
constant $k_{\lam}$ as follows.
\begin{dfn}\label{4.3}
For $\lam\in\Lambda$, take an arbitrary $T\in M(\lam)$. Define
$$k_{\lam}=\sum\limits_{X\in M(\lam)}\Phi(X,T)\Psi(X,T).$$
\end{dfn}

The following lemma reveals the relation among $G(\lam)$, $G'(\lam)$
and $k_{\lam}$. This result is a bridge which connects $k_{\lam}$
with $c_{ST}$.

\begin{lem}\label{4.4}
Let $\lam\in\Lambda$. Fix an order on the set $M(\lam)$. Then
$G(\lam)G'(\lam)=k_{\lam}E$, where $E$ is the identity matrix.
\end{lem}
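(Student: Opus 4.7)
The plan is to generalize Lemma~\ref{4.2} to the family $f_{i,j}:=D_{S_i,T}^{\lam}C_{T,S_j}^{\lam}$, which is independent of $T$ by Lemma~\ref{3.1}(6), and then extract the matrix identity by computing $f_{i,j}f_{k,l}$ in two different ways.

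First I will establish the matrix-unit-like relation
$$f_{i,j}\,f_{k,l}=k_\lam\,\delta_{j,k}\,f_{i,l}$$
by mimicking the computation in the proof of Lemma~\ref{4.2}. Applying Lemma~\ref{3.1}(3) to $C_{T,S_j}^{\lam}D_{S_k,T'}^{\lam}$, Lemma~\ref{4.1} reduces $i(\alpha^{-1}(C_{T,S_j}^{\lam}))$ to $C_{S_j,T}^{\lam}$ modulo $A(<\lam)$, so the relevant structure constant becomes $r_{C_{S_j,T}^{\lam}}(S_k,U')=\Phi(T,U')\delta_{j,k}$, which supplies the crucial Kronecker delta. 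The $A_D(>\lam)$ error terms are annihilated on the right by the trailing $C_{T',S_l}^{\lam}$ via Lemma~\ref{3.1}(5), and the $\Psi$-relation then collapses the resulting $D\cdot D$ product into the scalar $\sum_{U'}\Phi(T,U')\Psi(T,U')=k_\lam$.

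Second, Lemma~\ref{3.1}(2) yields the top-level $C$-expansion
$$f_{i,j}\equiv \sum_{X\in M(\lam)}\Psi(X,S_i)\,C_{X,S_j}^{\lam}\pmod{A(<\lam)},$$
since contributions from $\epsilon>\lam$ vanish (as $D_{Y,X}^{\epsilon}D_{S_i,T}^{\lam}\in A_D(>\lam)$ has no component on $D_{S_j,T}^{\lam}$) and contributions from $\epsilon<\lam$ automatically lie in $A(<\lam)$. Multiplying these top-level expansions and using $C_{X,S_j}^{\lam}C_{Y,S_l}^{\lam}\equiv \Phi(S_j,Y)C_{X,S_l}^{\lam}\pmod{A(<\lam)}$ gives
$$f_{i,j}\,f_{k,l}\equiv \big(G(\lam)G'(\lam)\big)_{j,k}\sum_{X}\Psi(X,S_i)\,C_{X,S_l}^{\lam}\pmod{A(<\lam)},$$
where $(G(\lam)G'(\lam))_{j,k}=\sum_{Y}\Phi(S_j,Y)\Psi(Y,S_k)$.

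Comparing this with $k_\lam\,\delta_{j,k}\,f_{i,l}\pmod{A(<\lam)}$ and using linear independence of $\{C_{X,S_l}^{\lam}:X\in M(\lam)\}$ modulo $A(<\lam)$ forces
$$\Psi(X,S_i)\,\big[(G(\lam)G'(\lam))_{j,k}-k_\lam\delta_{j,k}\big]=0 \qquad \text{for all } X,i.$$
If $G'(\lam)\neq 0$, some $\Psi(X,S_i)$ is nonzero, whence $G(\lam)G'(\lam)=k_\lam E$; otherwise $\Psi\equiv 0$ forces $k_\lam=0$ and both sides of the identity vanish trivially. The main obstacle is the careful bookkeeping in the first step: aligning the three successive applications of Lemma~\ref{3.1} together with Lemma~\ref{4.1} so that the $\delta_{j,k}$ emerges exactly once and the scalar $k_\lam$ is correctly extracted from the nested sum.
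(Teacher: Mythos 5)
Your argument is correct, but it takes a genuinely different and more systematic route than the paper. The paper isolates the off-diagonal entries: for $S\neq T$ it evaluates a single product of the form $D^{\lam}C^{\lam}D^{\lam}C^{\lam}$ in two ways --- one association is identically zero because $D_{P,Q}^{\lam}C_{S,T}^{\lam}=0$ when $Q\neq S$ (Lemma~\ref{3.1}(4)), while the other, carried out exactly as in Lemma~\ref{4.2}, produces $\sum_{X}\Phi(X,S)\Psi(X,T)$ times the nonzero element $D_{S,S}^{\lam}C_{S,S}^{\lam}$ --- and the diagonal entries are then just Definition~\ref{4.3}. You instead establish the full matrix-unit relations $f_{i,j}f_{k,l}=k_{\lam}\delta_{j,k}f_{i,l}$ and recompute the same products through the top-level expansion $f_{i,j}\equiv\sum_{X}\Psi(X,S_i)C^{\lam}_{X,S_j}\ (\mathrm{mod}\ A(<\lam))$, a step that has no counterpart in the paper's proof; the payoff is that all entries of $G(\lam)G'(\lam)=k_{\lam}E$, including the diagonal ones and the independence of $k_{\lam}$ from the choice of $T$, drop out of a single coefficient comparison, and your closing dichotomy correctly disposes of the degenerate case $\Psi\equiv 0$. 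The cost is the heavier bookkeeping you acknowledge, together with the same implicit appeal to the symmetry of $\Phi$ and $\Psi$ that the paper makes. One point to tighten: in justifying the top-level expansion you rule out contributions from $\epsilon>\lam$ and $\epsilon<\lam$ but say nothing about $\epsilon$ incomparable to $\lam$. The cleanest repair is to observe that $f_{i,j}\in A(\leq\lam)$ because $A(\leq\lam)$ is a two-sided ideal containing $C^{\lam}_{T,S_j}$, so only $\epsilon\leq\lam$ can occur at all; modulo $A(<\lam)$ only $\epsilon=\lam$ survives, and the coefficient of $C^{\lam}_{X,Y}$ is then read off from $D^{\lam}_{Y,X}D^{\lam}_{S_i,T}\equiv\Psi(X,S_i)D^{\lam}_{Y,T}\ (\mathrm{mod}\ A_D(>\lam))$ exactly as you indicate.
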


\begin{proof}
Clearly, we only need to show that $\sum\limits_{X\in
M(\lam)}\Phi(X,S)\Psi(X,T)=0$ for arbitrary $S, T\in M(\lam)$ with
$S\neq T$.

Let us consider
$D_{S,S}^{\lam}C_{S,S}^{\lam}D_{S,S}^{\lam}C_{T,S}^{\lam}$. On one
hand, it is zero by Lemma \ref{3.1}. On the other hand, computing it
as that in Lemma \ref{4.2} yields that
$$D_{S,S}^{\lam}C_{S,S}^{\lam}D_{S,S}^{\lam}C_{T,S}^{\lam}=\sum\limits_{X\in
M(\lam)}\Phi(X,S)\Psi(X,T)D_{S,S}^{\lam}C_{S,S}^{\lam}.$$ It follows
from $D_{S,S}^{\lam}C_{S,S}^{\lam}\neq 0$ that $\sum\limits_{X\in
M(\lam)}\Phi(X,S)\Psi(X,T)=0.$
\end{proof}

\begin{cor}\label{4.5}
Keep notations as above, then $c_{ST}={\rm
Tr}(\varphi_{ST})k_{\lam}$, where ${\rm Tr}$ denotes the usual
matrix trace.
\end{cor}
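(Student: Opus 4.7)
The plan is to reduce the claim to a direct comparison of two things: the trace of the rank-one operator $\varphi_{ST}$ on one hand, and the scalar $c_{ST}$ determined by Lemma~\ref{3.2} on the other. The first is completely mechanical, and the second is precisely what Lemma~\ref{4.4} computes.

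First I would unwind the definition of $\varphi_{ST}$. Since $\varphi_{ST}(C_X)=\delta_{SX}C_T$, in the cell-module basis $\{C_X : X\in M(\lam)\}$ the matrix of $\varphi_{ST}$ has a single nonzero entry equal to $1$ in the $(T,S)$-position. Consequently $\mathrm{Tr}(\varphi_{ST})=\delta_{ST}$, and the statement of the corollary reduces to the identity
\[
c_{ST}=\delta_{ST}\,k_{\lam}.
\]

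Next I would combine Lemma~\ref{3.2} with Lemma~\ref{4.4}. Lemma~\ref{3.2} gives
\[
c_{ST}=\sum_{Q\in M(\lam)}\Phi(T,Q)\Psi(S,Q).
\]
Under the assumption of this section, the right dual basis is itself cellular, so the analogue of the Graham–Lehrer symmetry for $\Psi$ gives $\Psi(S,Q)=\Psi(Q,S)$; together with the (already known) symmetry $\Phi(T,Q)=\Phi(Q,T)$ this rewrites $c_{ST}$ as the $(T,S)$-entry of the matrix product $G(\lam)G'(\lam)$. By Lemma~\ref{4.4} that product equals $k_{\lam}E$, so $c_{ST}=k_{\lam}\delta_{TS}$, which is exactly what we need. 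Combining with the trace computation of the first paragraph yields $c_{ST}=\mathrm{Tr}(\varphi_{ST})\,k_{\lam}$.

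The one step that needs care is the appeal to symmetry of $\Psi$; the corollary is stated in Section~4, where both dual bases are assumed to be cellular (with respect to the opposite order), so this symmetry is available directly from the cellular axioms applied to $\{d_{S,T}^{\lam}\}$ and $\{D_{S,T}^{\lam}\}$. If one wished to avoid even this mild appeal, an alternative route is to argue off-diagonally in the style of Lemma~\ref{4.2}–\ref{4.4}: compute $D_{S,S}^{\lam}C_{S,T}^{\lam}D_{T,T}^{\lam}C_{T,S}^{\lam}$ two ways, using parts (3) and (5) of Lemma~\ref{3.1} together with Lemma~\ref{4.1}, to obtain $\sum_{X}\Phi(X,T)\Psi(X,S)=0$ whenever $S\neq T$. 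This is the only potentially delicate point; everything else is bookkeeping.
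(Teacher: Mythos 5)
Your argument is correct and is essentially the paper's own proof: compute $\mathrm{Tr}(\varphi_{ST})=\delta_{ST}$, then combine Lemma \ref{3.2}, Definition \ref{4.3} and Lemma \ref{4.4}, which is exactly what the paper cites in its one-line justification. The only difference is that you explicitly address the mismatch the paper glosses over (Lemma \ref{3.2} exhibits $c_{ST}$ as an entry of $G'(\lam)G(\lam)$ while Lemma \ref{4.4} controls $G(\lam)G'(\lam)$, so one needs either the symmetry of $\Psi$ or an extra step); your appeal to the symmetry of $\Psi$ is legitimate under the standing hypotheses of Section 4, and an even cheaper fix is to note that $c_{ST}$ is only defined when $W_C(\lam)$ is simple, so $G(\lam)$ is invertible and $G(\lam)G'(\lam)=k_{\lam}E$ already forces $G'(\lam)G(\lam)=k_{\lam}E$.
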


\begin{proof}
Note that ${\rm Tr}(\varphi_{ST})=\delta_{ST}$. Then by Lemma
\ref{3.2}, Definition \ref{4.3} and Lemma \ref{4.4}, the corollary
follows.
\end{proof}

The constants $k_{\lam}$ could be viewed as generalizations of Schur
elements when $W_C(\lam)$ is simple. We are in a position to give
the main result of this section. It is similar to that in symmetric
case.
\begin{thm}\label{4.6}
Let $A$ be a Frobenius cellular algebra with
$i(D_{S,T}^{\lam})=D_{T,S}^{\lam}$ and
$i(d_{S,T}^{\lam})=d_{T,S}^{\lam}$ for arbitrary $\lam\in \Lambda$
and $S,T\in M(\lam)$. Then the cell module $W_C(\lam)$ is projective
if and only if $k_\lam\neq 0$.
\end{thm}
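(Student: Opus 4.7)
The plan is to derive Theorem 4.6 by composing Theorem 3.3 (already established in Section 3 for simple cell modules) with the matrix identity in Lemma 4.4, which is the characteristic new ingredient available under the hypothesis that both dual bases are cellular with respect to the opposite order.

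First I would note that, as throughout the paper, the statement concerns a simple cell module, so by Lemma 2.6 we have $\Phi_\lam\neq 0$ and the Gram matrix $G(\lam)$ is invertible (equivalently, $\dim_K L(\lam)=\rank G(\lam)=|M(\lam)|$). Under the symmetry hypothesis $i(D_{S,T}^\lam)=D_{T,S}^\lam$, $i(d_{S,T}^\lam)=d_{T,S}^\lam$ we are in the setting where Lemmas 4.1, 4.2 and 4.4 apply, so the identity
\[
G(\lam)\,G'(\lam)=k_\lam\,E
\]
holds.

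Next I would invoke Theorem 3.3: since $W_C(\lam)$ is simple, it is projective if and only if there exist $S,T\in M(\lam)$ with $\Psi(S,T)\neq 0$, which is precisely the statement that $G'(\lam)\neq 0$. So the theorem reduces to showing
\[
G'(\lam)\neq 0\iff k_\lam\neq 0.
\]
One direction is immediate from Lemma 4.4: if $k_\lam\neq 0$, then $G(\lam)G'(\lam)=k_\lam E\neq 0$, forcing $G'(\lam)\neq 0$. For the converse, invertibility of $G(\lam)$ lets us rewrite Lemma 4.4 as $G'(\lam)=k_\lam\,G(\lam)^{-1}$, so if $k_\lam=0$ then $G'(\lam)=0$; contrapositively $G'(\lam)\neq 0$ forces $k_\lam\neq 0$.

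Chaining these equivalences finishes the proof. Alternatively, one can bypass Theorem 3.3 and argue directly from Corollary 4.5 together with Corollary 2.3: the formula $c_{ST}={\rm Tr}(\varphi_{ST})k_\lam=\delta_{ST}k_\lam$ shows that $I(\varphi_{ST})=\delta_{ST}k_\lam\,{\rm id}_{W_C(\lam)}$, which is nonzero for some pair $(S,T)$ precisely when $k_\lam\neq 0$, and simplicity of $W_C(\lam)$ turns this into projectivity via Corollary 2.3. Either route is essentially bookkeeping; the only potential obstacle is the underlying requirement that $G(\lam)$ be invertible, but this is guaranteed by the simplicity hypothesis implicit in the statement (or, in the converse direction, could be recovered from the standard cellular fact that a projective cell module of a cellular algebra is simple).
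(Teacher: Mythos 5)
Your argument is essentially the paper's: the ``alternative'' route you sketch at the end (Corollary 4.5 together with Corollary 2.3, i.e.\ $c_{ST}=\delta_{ST}k_{\lam}$) is precisely the proof the paper gives, and your main route through Theorem 3.3 and Lemma 4.4 is only a repackaging of it. The one place where your bookkeeping goes wrong is the treatment of simplicity. Theorem 4.6, unlike Theorem 3.3, does \emph{not} assume that $W_C(\lam)$ is simple, so you cannot take invertibility of $G(\lam)$ as given at the outset. Of the two remedies you offer, only one direction is actually covered: projectivity implies irreducibility by K\"onig--Xi, which rescues the implication ``projective $\Rightarrow k_{\lam}\neq 0$''. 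For the converse implication ``$k_{\lam}\neq 0\Rightarrow$ projective'' you still need simplicity \emph{before} Theorem 3.3 or Corollary 2.3 can be invoked, and neither ``simplicity is implicit in the statement'' nor ``projective cell modules are simple'' supplies it there. The missing (one-line) observation is that Lemma 4.4 itself provides it: $G(\lam)G'(\lam)=k_{\lam}E$ with $k_{\lam}\neq 0$ forces $G(\lam)$ to be invertible, hence $\rad_{\Phi}\lam=0$ and $W_C(\lam)$ is simple. This is exactly how the paper opens the converse half of its proof (``assume that $k_{\lam}\neq 0$; then $W_C(\lam)$ is irreducible by Lemma 4.4''). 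With that sentence inserted, your proof is complete and coincides with the paper's.
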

\begin{proof}
By \cite[Corollary 1.2]{KX2}, if $W_C(\lam)$ is projective, then
$W_C(\lam)$ is irreducible. It follows from Corollary \ref{2.3} that
there exist $S, T\in M(\lam)$ such that $I(\varphi_{ST})\neq 0,$ or
$c_{ST}\neq 0$. This implies that $k_{\lam}\neq 0$ by Corollary
\ref{4.5}.

Conversely, assume that $k_{\lam}\neq 0$. Then $W_C(\lam)$ is
irreducible by Lemma \ref{4.4}. Again by Corollary \ref{2.3},
$W_C(\lam)$ is projective.
\end{proof}

\begin{cor}
$W_C(\lam)$ is projective if and only if so is $W_d(\lam)$.
\end{cor}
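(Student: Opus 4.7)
The plan is to deduce the corollary by applying Theorem~\ref{4.6} to both cellular bases $C$ and $d$ simultaneously, observing that the scalar $k_\lam$ is invariant under the $C\leftrightarrow d$ interchange.

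First I would verify that under the standing assumption of Section~4 together with the hypothesis $i(d_{S,T}^{\lam})=d_{T,S}^{\lam}$ of Theorem~\ref{4.6}, the right dual basis $d=\{d_{S,T}^{\lam}\}$ is itself a cellular basis of $A$, with cell datum $(\Lambda^{\mathrm{op}}, M, d, i)$. The three cellular axioms are readily checked: the involution condition is the hypothesis; the multiplicative filtration with respect to the opposite order on $\Lambda$ is exactly the standing assumption of Section~4; and the structure constants $r_{\alpha(a)}(V,V')$ appearing in Lemma~\ref{3.1}(9) supply the required coefficients. Under this identification, the module $W_d(\lam)$ of Remark~\ref{3.5} is precisely the cell module attached to $\lam$ for this new cellular datum.

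Next I would observe that the Frobenius pairing $f$ pairs $C$ against $d$, so when $d$ is taken as the distinguished cellular basis, the associated \emph{left} dual basis in the sense of Section~\ref{xxsec2} is (up to the standard index transposition built into the defining conditions) the original $C$-basis. Consequently the roles of the structure-constant functions $\Phi$ and $\Psi$ are exchanged: the Gram matrix of $W_d(\lam)$ is $G'(\lam)$, and the analogue of $G'(\lam)$ for the $d$-cellular structure is $G(\lam)$. From the defining formula of Definition~\ref{4.3}, the corresponding scalar for the $d$-structure is
\[
k_\lam^{d}\;=\;\sum_{X\in M(\lam)}\Psi(X,T)\,\Phi(X,T)\;=\;k_\lam.
\]

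Finally, Theorem~\ref{4.6} applied to each cellular structure yields
\[
W_C(\lam)\text{ is projective}\;\Longleftrightarrow\;k_\lam\neq 0\;\Longleftrightarrow\;k_\lam^{d}\neq 0\;\Longleftrightarrow\;W_d(\lam)\text{ is projective}.
\]
The main obstacle is the bookkeeping in the second step: one must check that the involution compatibility, the Nakayama-automorphism statement of Lemma~\ref{4.1}, and the pairing conventions really are symmetric under $C\leftrightarrow d$, so that Theorem~\ref{4.6} is legitimately applicable to the $d$-cellular datum. Once this symmetry is established, the equality $k_\lam^{d}=k_\lam$ and the conclusion are immediate.
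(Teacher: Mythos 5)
Your strategy---realizing $W_d(\lam)$ as the cell module of a cellular datum $(\Lambda^{\mathrm{op}},M,d,i)$ and applying Theorem~\ref{4.6} to that datum, using the symmetry $k_\lam^{d}=k_\lam$---is genuinely different from the paper's argument, but the step you yourself defer as ``bookkeeping'' is exactly where the content lies, and as written it has a real gap. Theorem~\ref{4.6} is stated under the standing assumption that \emph{both} dual bases of the distinguished cellular basis are cellular for the opposite order, together with $i$-compatibility of both dual bases. For the basis $d$, the left dual basis is indeed $C$ (since $\tau(C_{U,V}^{\mu}d_{S,T}^{\lam})=\delta_{\lam\mu}\delta_{UT}\delta_{VS}$), and $i(C_{S,T}^{\lam})=C_{T,S}^{\lam}$ is axiom (C2); but the \emph{right} dual basis of $d$ is $\alpha^{-1}(C)$, not $C$, and neither its cellularity for the original order nor the required identity $i(\alpha^{-1}(C_{S,T}^{\lam}))=\alpha^{-1}(C_{T,S}^{\lam})$ is among the hypotheses or verified by you. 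A second unaddressed point: the module $W_d(\lam)$ of Remark~\ref{3.5} carries the action $ad_S=\sum_{S'}r_{i(a)}(S,S')d_{S'}$, whereas the left action one extracts from Lemma~\ref{3.1}(9) by applying $i$ is governed by $r_{\alpha(i(a))}$; identifying the two (so that $W_d(\lam)$ really is the cell module of the $d$-datum) again needs Lemma~\ref{4.1} and an argument that the $\alpha$-twist is invisible modulo lower terms. Until these compatibilities are established, Theorem~\ref{4.6} cannot simply be quoted for the $d$-datum.

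For comparison, the paper avoids all of this: by Theorem~\ref{4.6} it suffices to show that $W_d(\lam)$ is projective iff $k_\lam\neq 0$, and this follows directly from $G(\lam)G'(\lam)=k_\lam E$ (Lemma~\ref{4.4}) together with Remark~\ref{3.5}. If $k_\lam\neq 0$, then $G'(\lam)$ is invertible (so $W_d(\lam)$ is simple) and $G(\lam)\neq 0$ (so $\lam\in\Lambda_0$ and Remark~\ref{3.5} gives projectivity); conversely, projectivity of $W_d(\lam)$ forces simplicity, hence $G'(\lam)$ invertible, and $\lam\in\Lambda_0$, hence $G(\lam)\neq 0$, whence $k_\lam\neq 0$. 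If you wish to keep your duality argument, you should either prove the missing compatibilities for the $d$-datum or replace the appeal to Theorem~\ref{4.6} on the $d$-side by this direct use of Lemma~\ref{4.4} and Remark~\ref{3.5}.
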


\begin{proof}
By Theorem \ref{4.6}, we only need to prove that $W_d(\lam)$ is
projective if and only if $k_{\lam}\neq 0$. In fact, if
$k_{\lam}\neq 0$, then Lemma \ref{4.4} forces $W_d(\lam)$ to be
simple. Thus $W_d(\lam)$ being projective follows. Conversely, if
$W_d(\lam)$ is projective, then \cite[Corollary 1.2]{KX2} implies
that $W_d(\lam)$ is simple, that is, $G'(\lam)$ is invertible.
Moreover, we have from Remark \ref{3.5} that $G(\lam)\neq 0$. Hence
we conclude by Lemma \ref{4.4} that $k_{\lam}\neq 0$.
\end{proof}

\begin{remark}
Using the left dual basis $D_{X,Y}^{\epsilon}$, we can also define
modules $W_D(\lam)$. It could be proved that $W_D(\lam)$ is
projective if and only if so is $W_d(\lam)$. We omit the details
here.
\end{remark}

\bigskip\bigskip

\end{document}